\definecolor{Chocolat}{rgb}{0.36, 0.2, 0.09}
\definecolor{BleuTresFonce}{rgb}{0.215, 0.215, 0.36}
\definecolor{EgyptianBlue}{rgb}{0.06, 0.2, 0.65}
\newtheorem{theorem}{Theorem}
\newtheorem{corollary}[theorem]{Corollary}
\newtheorem{lemma}[theorem]{Lemma}
\newtheorem{proposition}[theorem]{Proposition}
\theoremstyle{definition}
\newtheorem{remark}[theorem]{Remark}
\newtheorem{definition}[theorem]{Definition}
\newcommand{\pic}[1]{\vcenter{\hbox{\includegraphics[height=0.6cm]{NCP_#1.pdf}}}}
\DeclareMathAlphabet{\pazocal}{OMS}{zplm}{m}{n}
\DeclareMathOperator{\Yam}{\textsl{Yam}}
\DeclareMathAlphabet{\mathbbold}{U}{bbold}{m}{n}
\def\k{\mathbbold{k}}
\def\lb{\{ \! \! \{}
\def\rb{\} \! \! \}}
\title{Yamaguti algebras and noncrossing partitions}
\author{Frédéric Chapoton}
\address{Institut de Recherche Math\'ematique Avanc\'ee, UMR 7501, Universit\'e de Strasbourg et CNRS, 7 rue Ren\'e-Descartes, 67000 Strasbourg, France}
\email{chapoton@unistra.fr}
\author{Vladimir Dotsenko}
\address{Institut de Recherche Math\'ematique Avanc\'ee, UMR 7501, Universit\'e de Strasbourg et CNRS, 7 rue Ren\'e-Descartes, 67000 Strasbourg, France}
\email{vdotsenko@unistra.fr}
\date{\today}
\thanks{This work was supported by the ANR project HighAGT (ANR-20-CE40-0016).}
\begin{document}

\begin{abstract}
Recently, Das defined a new type of algebras, the Yamaguti algebras, which are supposed to serve as envelopes of Lie--Yamaguti algebras appearing naturally in differential geometry. We show that the nonsymmetric operad of Yamaguti algebras admits a combinatorial description via noncrossing partitions without singleton blocks, and a representation-theoretic description as the equivariant endomorphism operad of the adjoint module of the Lie algebra $\mathfrak{sl}_2$. 
\end{abstract}

\maketitle

\section{Yamaguti algebras}

In work of Nomizu~\cite{MR59050} on affine connections with parallel torsion and curvature, a new algebraic structure with one binary and one ternary structure operation emerged. It was axiomatized algebraically by Yamaguti~\cite{MR100047} as ``general Lie triple systems'', and renamed into ``Lie triple algebras'' by Kikkawa~\cite{MR383301}. Much later, it was renamed into ``Lie--Yamaguti algebras'' by Kinyon and Weinstein \cite{MR1833152}, which seems to be the preferred terminology these days. While the structure theory of Lie--Yamaguti algebras was studied in recent years~\cite{MR2494372,MR2720677}, it is not a very well understood algebraic structure. For instance, a basis in the free Lie--Yamaguti algebra does not seem to be known (a recent preprint of Stava~\cite{stava} that proposes such a basis implicitly proves that the subalgebra of the free Lie--Yamaguti algebra obtained by iterated binary products of generators is free, an assertion shown to be false by Bremner~\cite{MR3265628}). This is one of the motivations of the recent work of Das~\cite{das}, who defined a new type of algebras, which he called associative--Yamaguti algebras, or simply Yamaguti algebras. For these algebras, suitable symmetrization of their operations produces a Lie--Yamaguti algebra, and hence one may hope to study Lie--Yamaguti algebras via their Yamaguti envelopes. The precise definition of Yamaguti algebras is as follows.

\begin{definition}
A \emph{Yamaguti algebra} is a vector space equipped with a bilinear operation $a,b\mapsto a\cdot b$ and two trilinear operations 
 \[
 a,b,c\mapsto \{a,b,c\}\quad \text{and} \quad a,b,c\mapsto\lb a,b,c\rb
 \]
satisfying the identities
   \begin{equation}\label{ay1} \tag{AY1}
        (a \cdot b ) \cdot c -  a \cdot (b \cdot c) + \{ a, b, c \} - \lb a, b, c \rb = 0,
    \end{equation}
    \begin{equation}\label{ay2} \tag{AY2}
        \{ a \cdot b, c, d \} = \{ a, b \cdot c, d \},
    \end{equation}
    \begin{equation}\label{ay3} \tag{AY3}
        \{ a, b, c \cdot d \} = \{ a, b, c \} \cdot d,
    \end{equation}
    \begin{equation}\label{ay4} \tag{AY4}
        \lb a \cdot b, c, d\rb = a \cdot \lb b, c, d \rb,
    \end{equation}
    \begin{equation}\label{ay5} \tag{AY5}
        \lb a, b \cdot c, d\rb = \lb a, b, c \cdot d \rb ,
    \end{equation}
    \begin{equation}\label{ay6} \tag{AY6}
        a \cdot \{ b, c, d \} = \lb a, b, c \rb \cdot d,
    \end{equation}
    \begin{equation}\label{ay7} \tag{AY7}
         \{ \{ a, b, c \}, d, e \} = \{ a, \lb b, c, d\rb, e \} = \{ a, b, \{ c, d , e \} \},
    \end{equation}
    \begin{equation}\label{ay8} \tag{AY8}
        \{ a, \{ b, c , d \} , e \} = \{ \lb a, b, c\rb, d, e \},
    \end{equation}
    \begin{equation}\label{ay9} \tag{AY9}
        \lb   \lb a, b, c\rb, d, e  \rb = \lb a, \{ b, c, d \}, e \rb = \lb a, b, \lb c, d, e\rb \rb,
    \end{equation}
    \begin{equation}\label{ay10} \tag{AY10}
         \lb a, \lb b, c, d \rb, e \rb  = \lb a, b, \{c , d, e \} \rb,
    \end{equation}
    \begin{equation}\label{ay11}\tag{AY11}
        \{ a, b, \lb c, d, e \rb \} = \lb \{ a, b, c \}, d, e \rb.
    \end{equation}
\end{definition} 

Note that in each Relation~\eqref{ay1}--\eqref{ay11} the arguments appear in the same order, which means that a Yamaguti algebra is an algebra over a nonsymmetric operad, which we denote $\Yam$, and call the Yamaguti operad. In this note, we show that this operad has two other descriptions: a very simple combinatorial one, and one relating it to representation theory of $\mathfrak{sl}_2$; one may hope that these results, besides being interesting in their own right, might also be useful to understand the more elusive Lie--Yamaguti operad. It also follows from our description that the operad $\Yam$ is cyclic; the same statement for the Lie--Yamaguti operad is implicit in the literature, since there exists a meaningful notion of an invariant bilinear form on a Lie--Yamaguti algebra \cite{MR4442492}.

All vector spaces in this paper are defined over a field $\k$; for some of the results, it will be important to assume $\mathrm{char}(\k)\ne 2$.

\section{Operad of noncrossing partitions without singleton blocks}

Let $B(n)$ be the set of noncrossing partitions of the set
$\{0,1,2,\ldots,n\}$ without singleton blocks; these combinatorial objects were first considered by Kreweras \cite[Sec.~5]{MR309747}. We display such a noncrossing partition by a picture where integers are segments on the
boundary of a disk, with the segment $0$ at the bottom. The blocks
are then represented by blue regions inside the disk as in
Figure~\ref{fig1}.

\begin{figure}[h]
  \centering
  \includegraphics[height=3cm]{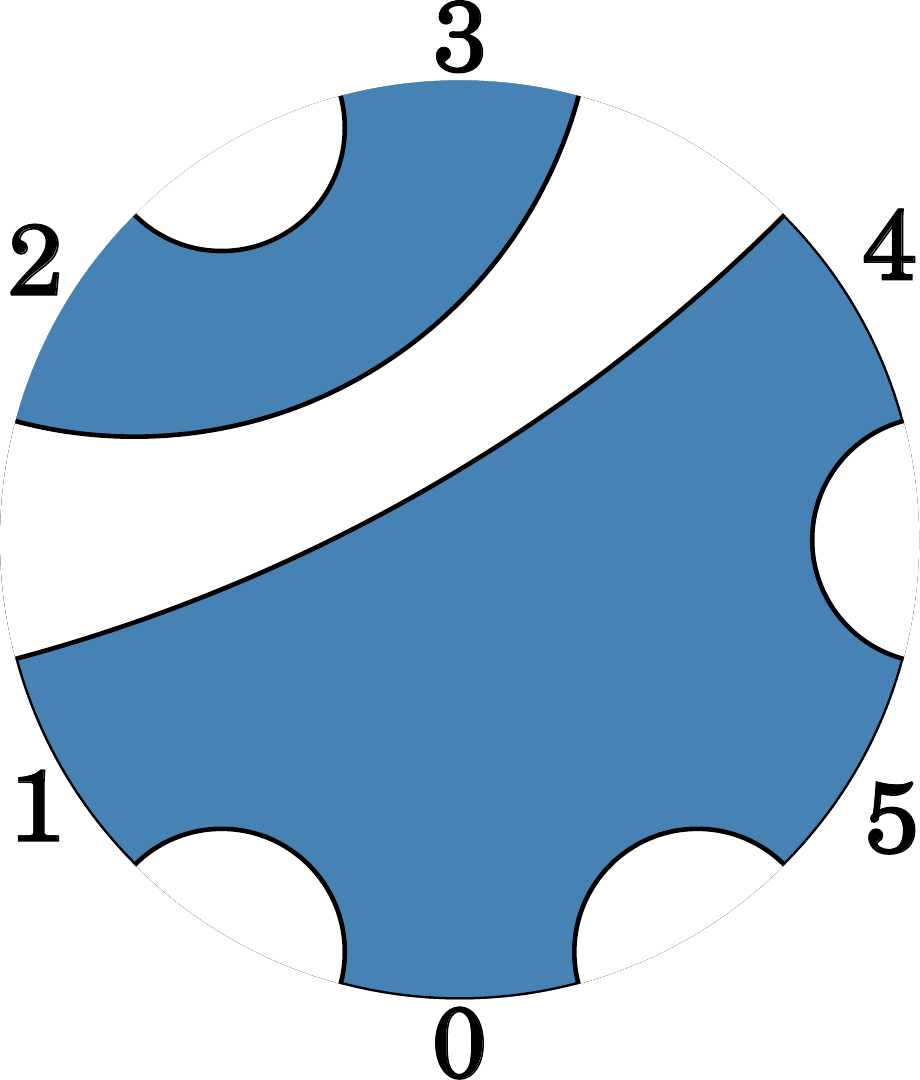}
  \caption{The noncrossing partition with blocks $\{0,1,4,5\}$ and $\{2,3\}$.}\label{fig1}
\end{figure}

Let us define a nonsymmetric operad $\mathscr{B}$ in the category of vector
spaces. The component $\mathscr{B}(n)$ of arity $n$ is the vector space with basis $B(n)$. 

The composition maps $\circ_i$ are defined on the basis as
follows. Let $\pi \in B(m)$, $\nu \in B(n)$ and $1 \leq i \leq
m$. Then $\pi \circ_i \nu$ is the sum of two terms:
\begin{itemize}
\item the noncrossing partition obtained by juxtaposition of the two
  disks and identification of the segment $i$ of $\pi$ with the
  segment $0$ of $\nu$,
\item the noncrossing partition obtained from the previous one by
  cutting the new block along the gluing line made from the
  identified segments, thereby creating two blocks.
\end{itemize}
It can happen that the second case creates a singleton block, in which
case this term is omitted. This happens exactly when either the block in
$\nu$ containing $0$ has $2$ elements or the block in $\pi$ containing
$i$ has $2$ elements.

For example, one gets
\begin{equation*}
  \pic{4_e} \circ_3 \pic{2} = \pic{5_i} + \pic{5_c}.
\end{equation*}

\begin{proposition}
The thus defined operations $\circ_i$ make $\mathscr{B}$ into a nonsymmetric cyclic operad.
\end{proposition}

\begin{proof}
First of all, we note that the element $\pic{1}$ satisfies the axioms of the operadic unit, because when
composing with this element, only the first term appears. Next, we note that our definition of the compositions is immediately seen compatible with the cyclic group action on $B(n)$, since the combinatorics of these compositions only uses the planar depiction of noncrossing partitions.

It remains to check the two other axioms of nonsymmetric operads, that is, the axioms of the parallel and the sequential composition \cite[Chapter 5]{LV}.
For the parallel axiom $(\pi \circ_i \mu) \circ_j \nu=(\pi \circ_j \nu) \circ_i \mu$ corresponding to the simultaneous composition at positions $i$ and $j$ inside a noncrossing partition $\pi$, the statement is obvious if $i$ and $j$ are not in the same block of $\pi$, as the composition rules play independently at $i$ and $j$. One has to examine what happens when $i$
and $j$ are in a block $b$ of $\pi$. If $|b|$ is at least $4$, the rules also play independently. If $|b|$ is $2$ or $3$, one can check
all possible cases. For instance, if $b=\{i,j\}$, then 
\begin{itemize}
\item[-] in each of the two compositions $\pi \circ_i \mu$ and $\pi \circ_j \nu$ only the first term appears, 
\item[-] each of the two compositions $(\pi \circ_i \mu) \circ_j \nu$ and $(\pi \circ_j \nu) \circ_i \mu$ has two terms, one corresponding to identification of the segments $i$ and $j$ of $\pi$ with the segments $0$ of $\mu$ and of $\nu$ respectively, and the other  
obtained from the previous one by cutting the new block into its $\mu$-part and its $\nu$-part, thereby creating two blocks. (The second term is not present if either of the blocks in $\mu$ and $\nu$ containing $0$ has $2$ elements.)
\end{itemize}
It follows that the sequential composition axiom also holds, since it can be reduced to the parallel axiom by a cyclic group action, which completes the proof. 
\end{proof}

\begin{remark}
  We note that a nonsymmetric operad of noncrossing partitions
  (without restrictions on blocks) was defined by Ebrahimi--Fard,
  Foissy, Kock and Patras~\cite{MR4093606}, who used it in the context
  of moment-cumulant relations in free probability. However, their
  operad, unlike ours, is set-theoretic, and uses partitions of
  $\{1,\ldots,n\}$ as operations of arity $n+1$, so there does not
  seem to be any obvious relationship between the two.
\end{remark}

\begin{lemma}\label{lm:outer_block}
  Let $\pi$ be a noncrossing partition of $\{0,1,\ldots,n\}$ without
  singleton blocks and with at least $2$ blocks, for some $n \geq
  3$. Then $\pi$ has a block made of a sequence of consecutive
  non-zero integers.
\end{lemma}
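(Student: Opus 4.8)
The plan is to interpret noncrossing partitions via the standard planar/nesting picture and argue by looking at an innermost block. Draw $\pi$ on the disk as in the paper, with segments $0,1,\dots,n$ on the boundary. Every block $b$ spans a circular arc on the boundary, namely the shortest arc containing all segments of $b$; say a block $b'$ is \emph{enclosed} by $b$ if the arc of $b'$ is strictly contained in the arc of $b$ (this makes sense because $\pi$ is noncrossing, so the arcs are nested or disjoint). First I would pick a block $b$ of $\pi$ that does not contain the segment $0$ — such a block exists since $\pi$ has at least two blocks and $0$ lies in only one of them. Among all blocks not containing $0$, choose one, call it $b$, whose boundary arc is minimal with respect to enclosure (equivalently, $b$ is innermost among the blocks avoiding $0$).

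The key claim is then that this minimal block $b$ consists of a sequence of consecutive non-zero integers. Since $0\notin b$, all elements of $b$ are non-zero, so it remains to show they are consecutive. Suppose not: then there are two cyclically adjacent elements $i<j$ of $b$ (adjacent in the cyclic order of $b$) with at least one integer strictly between them, i.e.\ the open arc between segment $i$ and segment $j$ that avoids the rest of $b$ contains some segments $i+1,\dots,j-1$ with $j-i\ge 2$. Because $\pi$ is noncrossing and every block has size $\ge 2$, each of those intermediate segments belongs to a block entirely contained in that open arc; pick any such intermediate segment and let $b''$ be its block. Then $b''$ does not contain $0$ (since $0$ lies outside the arc of $b$, which contains the arc of $b''$), and the arc of $b''$ is strictly inside the arc of $b$. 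This contradicts the minimality of $b$. Hence the elements of $b$ are consecutive integers, none of which is $0$, which is exactly the desired conclusion.

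I would organise the write-up around two lemmas-within-the-proof: (i) the arcs of blocks of a noncrossing partition are pairwise nested-or-disjoint, so "innermost among blocks avoiding $0$" is well defined; and (ii) the gap-filling step, that any boundary segment strictly between two cyclically consecutive elements of a block must lie in a block nested strictly inside. Both are standard facts about noncrossing partitions, so I expect the argument to be short. The main point requiring a little care — the only place a reader might object — is the case analysis ensuring that a "gap" in $b$ really forces an intermediate boundary segment (rather than the gap being empty), which is why one must choose $i,j$ to be \emph{cyclically consecutive} in $b$ and why the hypothesis $n\ge 3$ together with "at least two blocks, no singletons" guarantees there is room for such a configuration; with these choices in place the contradiction with minimality is immediate.
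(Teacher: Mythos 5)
Your proposal is correct and is essentially the paper's argument: the paper iterates outward from the block containing $0$ through the nested complementary regions until it hits an innermost block, which is exactly the extremal ("minimal arc among blocks avoiding $0$") block you select directly, and your gap-filling contradiction is the reason that process terminates at a block of consecutive non-zero integers. Your write-up just makes the extremal step and the noncrossing/nesting facts more explicit than the paper does.
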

\begin{proof}
  The unique block $b$ containing $0$ defines a partition of the other
  blocks according to which connected component of the complement of
  $b$ they belong. Because there are at least $2$ blocks, at least one
  of these connected components contains a block. Iterating this
  process by choosing blocks further away from $b$, one must reach at
  some step a block $b'$ whose elements are a sequence of consecutive
  non-zero integers.
\end{proof}

\begin{proposition}\label{prop:gen_B}
  The operad $\mathscr{B}$ is generated by the elements
  \begin{equation*}
    \pic{2} \in \mathscr{B}(2)\quad\text{ and } \quad \pic{3_c}\,\, ,\,\, \pic{3_b}\in \mathscr{B}(3).
  \end{equation*}
\end{proposition}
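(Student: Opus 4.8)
\emph{Proof proposal.}

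The plan is to let $\mathscr{B}'\subseteq\mathscr{B}$ be the suboperad generated by $\pic{2}$, $\pic{3_c}$ and $\pic{3_b}$, and to prove that $B(n)\subseteq\mathscr{B}'(n)$ for every $n\geq 1$ by strong induction on the arity $n$. Throughout, write $\omega_m\in B(m)$ for the partition with the single block $\{0,1,\dots,m\}$, so that $\omega_1$ is the operad unit and $\omega_2=\pic{2}$; and let me fix (reversing it if needed) the convention that $\pic{3_b}$ has blocks $\{0,3\}$ and $\{1,2\}$, while $\pic{3_c}$ has blocks $\{0,1\}$ and $\{2,3\}$. The base cases $n\leq 2$ are trivial, since $B(1)$ reduces to the unit and $B(2)=\{\pic{2}\}$. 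A useful preliminary remark that keeps every computation below single-valued: since each block of $\pic{3_b}$ and of $\pic{3_c}$ has exactly two elements, any composition $\circ_i$ having one of these two partitions as an argument produces only its first term, the cut being suppressed.

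For the inductive step fix $n\geq 3$ and $\pi\in B(n)$. Suppose first that $\pi$ has a single block, so $\pi=\omega_n$. The composition rule gives $\pic{2}\circ_1\omega_{n-1}=\omega_n+\eta$, where $\eta$ denotes the partition with blocks $\{0,n\}$ and $\{1,\dots,n-1\}$; moreover $\eta=\pic{3_b}\circ_2\omega_{n-2}$. Hence $\omega_n=\pic{2}\circ_1\omega_{n-1}-\pic{3_b}\circ_2\omega_{n-2}$, which lies in $\mathscr{B}'(n)$ because $\omega_{n-1}$ and $\omega_{n-2}$ lie in $\mathscr{B}'$ by the induction hypothesis (for $n=3$ these two factors are $\pic{2}$ and the unit).

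Now suppose $\pi$ has at least two blocks. By Lemma~\ref{lm:outer_block} there is a block $b=\{j,j+1,\dots,j+k-1\}$ made of $k\geq 2$ consecutive non-zero integers, and the idea is to peel $b$ off. If $k\geq 3$: replacing $b$ by the two-element block $\{j,j+1\}$ on its first two segments and deleting the segments $j+2,\dots,j+k-1$ produces $\pi''\in B(n-k+2)$ (still noncrossing, and with no singleton block since $|b|\geq 3$), and inspection of the composition rule gives $\pi=\pi''\circ_j\omega_{k-1}$, a single-term composition because the block of $\pi''$ at position $j$ has two elements. As $n-k+2<n$ and $k-1<n$, both factors lie in $\mathscr{B}'$ by the induction hypothesis, whence $\pi\in\mathscr{B}'(n)$. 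If $k=2$, say $b=\{j,j+1\}$: deleting $b$ together with its two segments leaves $\rho\in B(n-2)$ (removing an outer two-element block does not touch the rest of the partition), and tracking which blocks are merged under operadic substitution one obtains $\pi=\rho\circ_j\pic{3_b}$ when $j\leq n-2$, and $\pi=\rho\circ_{j-1}\pic{3_c}$ when $j\geq 2$. Since $1\leq j\leq n-1$, at least one of these applies; each is a single-term composition, and $\rho\in\mathscr{B}'$ by the induction hypothesis, so $\pi\in\mathscr{B}'(n)$. This exhausts all cases and completes the induction.

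The part of the argument requiring care is the verification of the composition identities $\pic{2}\circ_1\omega_{n-1}=\omega_n+\eta$, $\eta=\pic{3_b}\circ_2\omega_{n-2}$, $\pi=\pi''\circ_j\omega_{k-1}$, and $\pi=\rho\circ_j\pic{3_b}$ (respectively $\pi=\rho\circ_{j-1}\pic{3_c}$): in each one has to determine precisely which blocks of the two operands are merged along the gluing segment, confirm that the first term of the composition is the asserted noncrossing partition, and confirm that the cut term either is absent or equals exactly $\eta$. This is bookkeeping of the same nature as the verification of the nonsymmetric operad axioms for $\mathscr{B}$ carried out earlier in the paper; together with the easy check that the dichotomy $k\geq 3$ versus $k=2$ (with its two sub-options) is genuinely exhaustive, I expect this to be the main, though entirely routine, obstacle.
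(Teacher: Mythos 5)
Your proof is correct and takes essentially the same route as the paper's: induction on arity, using Lemma~\ref{lm:outer_block} to locate a block of consecutive non-zero integers and peel it off, with the case split $|b|\geq 3$ versus $|b|=2$, and a two-term composition trick for the single-block partitions. The only (harmless) difference is that you settle the single-block case self-containedly via the explicit identity $\omega_n=\pic{2}\circ_1\omega_{n-1}-\pic{3_b}\circ_2\omega_{n-2}$, whose ingredients all have strictly smaller arity, whereas the paper instead subtracts a two-block partition of the same arity $n$ and appeals to the already-treated multi-block case.
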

\begin{proof}
  Let $\mathscr{G}$ be the sub-operad generated by the elements indicated above. Let us prove by induction on the arity that $\mathscr{G}$ contains every basis element. 

  The statement is clear in arity at most $2$. Let $\pi$ be a basis element of arity $n \geq 3$.

  Assume first that $\pi$ has at least $2$ blocks. By Lemma~\ref{lm:outer_block}, $\pi$ contains a block $b$ made of
  consecutive non-zero integers $\{i,i+1,\ldots,j\}$.

  If the block $b$ has at least $3$ elements, let $\nu$ be defined by
  replacing $b$ by a block with two elements $\{i,i+1\}$ and
  renumbering the integers after $j+1$. Let $\mu$ be the noncrossing
  partition with one part of cardinality $|b|$. Then
  $\pi = \nu \circ_{i+1} \mu$. By induction, $\pi$ is on $\mathscr{G}$.

  If the block $b$ has $2$ elements, let $\nu$ be defined by removing
  the block $b$ and renumbering the integers after $i+2$. Then $\pi$
  can be written as some composition $\nu \circ_k \pic{3_c}$ or
  $\nu \circ_k \pic{3_b}$ for some appropriate choice of
  $k$. Therefore $\pi$ is in $\mathscr{G}$.

  There remains to handle the case where $\pi$ has just one block
  $b$. The composition of two smaller noncrossing partitions with just
  one block, both in arity at least $2$, gives $\pi$ plus a
  noncrossing partition $\pi'$ with two blocks. We already know that
  $\pi'$ is in $\mathscr{G}$, so that $\pi$ is also in $\mathscr{G}$.
\end{proof}

\section{Yamaguti operad and noncrossing partitions}

We shall now establish the first main result of this paper, showing that the combinatorially defined operad $\mathscr{B}$ and the Yamaguti operad $\Yam$ are isomorphic. The isomorphism will be implemented by the map constructed as follows.

\begin{proposition}
There is a well-defined morphism
 \[
\psi\colon\Yam\to \mathscr{B}   
 \]
defined on the generators by the formula
\begin{align}
  -\cdot- &\longmapsto \pic{2},\\
  \{ -, -, - \}    & \longmapsto - \pic{3_c},\\
  \lb -, -, - \rb    & \longmapsto - \pic{3_b}.
\end{align}
\end{proposition}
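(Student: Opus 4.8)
The plan is to show that the three images $\pic{2}$, $-\pic{3_c}$, $-\pic{3_b}$ in $\mathscr{B}$ satisfy the eleven defining relations \eqref{ay1}--\eqref{ay11} of the Yamaguti operad. Since $\Yam$ is defined as the nonsymmetric operad presented by the generators $-\cdot-$, $\{-,-,-\}$, $\lb -,-,-\rb$ subject to exactly those relations, any assignment of the generators to elements of $\mathscr{B}$ that respects the relations extends uniquely to an operad morphism; so the content of the proposition is precisely this finite verification. First I would fix, once and for all, explicit pictures (noncrossing partitions of $\{0,\dots,4\}$ and $\{0,\dots,5\}$) for each monomial that occurs: there are relatively few composites of arity $4$ and $5$ built from $\pic{2}$ and the two arity-$3$ generators, and each is computed by the two-term composition rule of Section~2 (juxtaposition, plus the ``cut along the gluing line'' term, discarding any term with a singleton block).

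The key steps, relation by relation, are then routine but must be organized. I would group the relations by the arity and shape of the terms involved. The relations \eqref{ay2}--\eqref{ay6} are arity-$4$ identities each equating two composites; for instance \eqref{ay2} asks $(-\pic{3_c})\circ_1\pic{2} = (-\pic{3_c})\circ_2\pic{2}$, i.e. the two ways of inserting a binary block into the first ternary generator give the same noncrossing partition of $\{0,\dots,4\}$ — here one checks that the ``cut'' term is killed by the singleton rule on one side and absent on the other, so both sides collapse to a single picture. Relation \eqref{ay1} is the one place where the sum of two terms in a composition is genuinely used: $\pic{2}\circ_1\pic{2} - \pic{2}\circ_2\pic{2}$ expands, via the cut rule, into a difference of noncrossing partitions of $\{0,\dots,3\}$ that must match $(-\pic{3_c}) - (-\pic{3_b}) = \pic{3_b} - \pic{3_c}$ after moving signs; this is the sanity check that fixes the signs in the definition of $\psi$. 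Finally \eqref{ay7}--\eqref{ay11} are arity-$5$ identities among composites of two ternary generators (the signs $(-1)^2=1$ cancel), and each reduces, again after the singleton rule eliminates the cut terms, to an equality of two explicit pictures of $\{0,\dots,5\}$; the three-fold equalities in \eqref{ay7} and \eqref{ay9} are handled as two separate pairwise checks.

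The main obstacle is not conceptual but bookkeeping: one must be careful, in each composite, about \emph{which} of the two terms in $\pi\circ_i\nu$ survives. The decisive observation — stated already in Section~2 — is that the cut term is omitted exactly when the block of $\nu$ through $0$ has two elements or the block of $\pi$ through $i$ has two elements; since $\pic{2}$ is a single two-element block and the arity-$3$ generators $\pic{3_c}$, $\pic{3_b}$ also have a two-element block (through $0$ in one case, not in the other), in almost every composite appearing in \eqref{ay2}--\eqref{ay11} the cut term is suppressed on at least one side, so each side is a single basis element and the identity becomes a direct picture comparison. The only composite where both terms genuinely appear is the $\pic{2}\circ\pic{2}$ in \eqref{ay1}, which is why that relation, and it alone, involves the two non-associative ternary generators on the right. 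I would therefore present the proof as: (i) recall the suppression criterion; (ii) do \eqref{ay1} explicitly to pin down signs; (iii) tabulate the surviving picture for each composite in \eqref{ay2}--\eqref{ay11} and observe the claimed coincidences. No step requires more than drawing a handful of small disks.
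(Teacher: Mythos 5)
Your proposal is correct and follows essentially the same route as the paper: verify the eleven relations directly, treating \eqref{ay1} as the one genuinely two-term computation that fixes the signs, and observing that every composite involving a ternary generator collapses to a single basis element because all blocks of $\pic{3_b}$ and $\pic{3_c}$ have two elements, so the cut term is always suppressed. The only remaining work is the explicit tabulation of the surviving pictures, which is exactly what the paper's proof does.
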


\begin{proof}
To prove the above assertion, one needs to check that all relations~\eqref{ay1}--\eqref{ay11} hold for the images of the generators of $\Yam$.
One finds that
\begin{equation*}
  \pic{2} \circ_1 \pic{2} = \pic{3_a} + \pic{3_c},\quad\text{and}\quad
  \pic{2} \circ_2 \pic{2} = \pic{3_a} + \pic{3_b},
\end{equation*}
so that 
 \[
\pic{2} \circ_1 \pic{2} - \pic{2} \circ_2 \pic{2} = \pic{3_c} - \pic{3_b},    
 \]
and therefore Relation~\eqref{ay1} holds. For all the other relations, the computation is in fact a bit simpler, since they all involve the ternary generators 
 \[
\pic{3_c} \quad \text{ and  } \quad \pic{3_b},   
 \]
and composition of a basis element with either of them is a single basis element. Concretely, Relations~\eqref{ay2}--\eqref{ay6} hold because 
\begin{align*}
  \pic{3_c} \circ_1 \pic{2} &= \pic{4_d} = \pic{3_c} \circ_2 \pic{2},\\
  \pic{2} \circ_1 \pic{3_c} &= \pic{4_c} = \pic{3_c} \circ_3 \pic{2},\\
  \pic{2} \circ_2 \pic{3_b} &= \pic{4_b} = \pic{3_b} \circ_1 \pic{2},\\
  \pic{3_b} \circ_2 \pic{2} &= \pic{4_e} = \pic{3_b} \circ_3 \pic{2},\\
  \pic{2} \circ_1 \pic{3_b} &= \pic{4_f} = \pic{2} \circ_2 \pic{3_c},
\end{align*}
and Relations~\eqref{ay7}--\eqref{ay11} hold because 
\begin{align*}
  \pic{3_c} \circ_1 \pic{3_c} &= \pic{5_b} = \pic{3_c} \circ_2 \pic{3_b} = \pic{3_c} \circ_3 \pic{3_c},\\
  \pic{3_c} \circ_2 \pic{3_c} &= \pic{5_e} = \pic{3_c} \circ_1 \pic{3_b},\\
  \pic{3_b} \circ_1 \pic{3_b} &= \pic{5_a} = \pic{3_b} \circ_2 \pic{3_c} = \pic{3_b} \circ_3 \pic{3_b},\\
  \pic{3_b} \circ_2 \pic{3_b} &= \pic{5_c} = \pic{3_b} \circ_3 \pic{3_c},\\
  \pic{3_c} \circ_3 \pic{3_b} &= \pic{5_d} = \pic{3_b} \circ_1 \pic{3_c}.
\end{align*}
Note that the signs in the definition of $\psi$ only play a role in the first relation.
\end{proof}

\begin{theorem}\label{isom}
The morphism $\psi\colon\Yam\to \mathscr{B}$ is an isomorphism.
\end{theorem}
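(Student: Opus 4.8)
The plan is to establish that $\psi$ is an isomorphism by a dimension-counting argument: since $\psi$ is a morphism of nonsymmetric operads and, by Proposition~\ref{prop:gen_B}, its image contains a generating set of $\mathscr{B}$, the map $\psi$ is surjective in every arity. It therefore suffices to show that $\dim\Yam(n)\le\dim\mathscr{B}(n)=|B(n)|$ for all $n$, since then $\psi_n$ is a surjection between finite-dimensional spaces of equal dimension, hence an isomorphism. The generating function for $|B(n)|$ is classical (Kreweras~\cite{MR309747}): the noncrossing partitions of $\{0,1,\dots,n\}$ without singleton blocks are counted by a known sequence, and I would record its generating function $f(t)=\sum_n|B(n)|t^n$, which satisfies an explicit algebraic equation coming from the block-decomposition of a noncrossing partition.

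The heart of the argument is the upper bound on $\dim\Yam(n)$. For this I would exhibit a spanning set of $\Yam(n)$ indexed by a set of the same cardinality as $B(n)$, obtained by using the relations \eqref{ay1}--\eqref{ay11} as a rewriting system to bring every tree monomial into a normal form. Concretely, relations \eqref{ay2}--\eqref{ay11} all equate two ways of nesting the ternary operations and the product, and relation \eqref{ay1} lets one eliminate one of the two ternary operations (say express $\lb-,-,-\rb$ in terms of $\cdot$ and $\{-,-,-\}$ up to an associator, or vice versa) — although here one must be careful, as \eqref{ay1} rewrites a degree-2 monomial into degree-3 monomials, so it does not decrease complexity; the right move is instead to orient \eqref{ay1} to remove associativity defects and treat $\cdot$ as associative on the relevant subexpressions. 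I would set up a monomial order and check that the resulting rewriting system is confluent, so that irreducible monomials span; then I would biject these irreducible monomials with $B(n)$, reading off from a noncrossing partition a canonical parenthesization. Alternatively, and perhaps more cleanly, I would compute a Gröbner basis for the operad $\Yam$ (the eleven relations, suitably completed) and verify that the leading terms are exactly complemented by tree monomials counted by $|B(n)|$; this reduces the whole bound to a finite check in low arities plus a combinatorial identification of the normal monomials.

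The main obstacle I anticipate is precisely the confluence/Gröbner verification: the relations \eqref{ay7}, \eqref{ay9} are ternary "pentagon-like" identities with three equal expressions, generating many S-polynomials, and \eqref{ay1} mixes arities in an awkward way, so establishing that no new relations arise beyond arity $4$ or $5$ requires either a careful by-hand analysis of overlaps or a computer-algebra computation. An attractive shortcut that sidesteps rewriting altogether is to construct an explicit right inverse: define a linear map $\varphi_n\colon\mathscr{B}(n)\to\Yam(n)$ on the basis $B(n)$ by assigning to each noncrossing partition a specific element of $\Yam(n)$ (built by decomposing the partition along its blocks, with the outer block recovered in the proof of Lemma~\ref{lm:outer_block} telling us where to "compose in" a ternary or binary generator), check that $\psi_n\circ\varphi_n=\mathrm{id}$ directly from the composition table in the previous proposition, and check that $\varphi$ is surjective by showing its image is a sub-operad containing the generators — that last point again reduces to the generation statement. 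Either way, the combinatorial bookkeeping tying the block structure of $\pi\in B(n)$ to a normal-form monomial in $\Yam$ is where the real work lies; once that dictionary is fixed, the isomorphism follows formally.
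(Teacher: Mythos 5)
Your overall strategy --- surjectivity from Proposition~\ref{prop:gen_B} plus an upper bound $\dim\Yam(n)\le |B(n)|$ obtained from normal forms with respect to a monomial order --- is exactly the paper's strategy, including the preliminary step of using \eqref{ay1} to eliminate the generator $\lb-,-,-\rb$ and re-deriving the relations among $-\cdot-$ and $\{-,-,-\}$. But you have one conceptual error that leads you to misidentify the main difficulty. You write that one must ``check that the resulting rewriting system is confluent, so that irreducible monomials span''; this gets the logic of the Gr\"obner/rewriting method backwards. For any quotient of a free operad by relations and any admissible monomial order, the monomials not divisible by the leading terms of the relations \emph{always} span the quotient --- no confluence, completion, or S-polynomial analysis is needed for that direction. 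Confluence is what you would need to prove that the normal monomials are linearly \emph{independent}, and that is precisely the half of the argument you get for free from the surjection onto $\mathscr{B}$: once the count of normal monomials equals $|B(n)|$, the upper and lower bounds meet, and independence (equivalently, the Gr\"obner property of the relations) follows as a corollary rather than being an input --- the paper records exactly this consequence in its final section. So the ``main obstacle'' you anticipate, namely verifying confluence of the many overlaps coming from \eqref{ay7}, \eqref{ay9}, and the arity-mixing \eqref{ay1}, simply does not arise. What remains is a finite computation of the reduced relations (the paper finds six of them) and the enumeration of normal monomials, which the paper carries out not by an explicit bijection with $B(n)$ but by a small system of algebraic equations for generating functions whose solution satisfies the functional equation of the Riordan numbers; either route would do, but you should be aware that the enumeration, not confluence, is where the work is.

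Your proposed shortcut via a right inverse $\varphi_n\colon\mathscr{B}(n)\to\Yam(n)$ is also shakier than it looks: $\varphi$ is only a linear map in each arity, not a morphism of operads, so the step ``check that $\varphi$ is surjective by showing its image is a sub-operad containing the generators'' is not available --- the image of a linear map has no reason to be closed under operadic composition. Proving $\varphi_n$ surjective directly is equivalent to the spanning statement you were trying to sidestep, so this alternative does not actually avoid the normal-form argument.
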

\begin{proof}
By Proposition~\ref{prop:gen_B}, the morphism $\psi$ is surjective, so the dimension of $\Yam(n)$ is bounded from below by the number of noncrossing partitions of $\{0,1,\ldots,n\}$ without singleton blocks. Let us show that the same number serves as an upper bound for $\dim\Yam(n)$; this would imply that $\psi$ is bijective.

We begin by noting that Identity~\eqref{ay1} can be used to write 
 \[
\lb a, b, c \rb = (a \cdot b ) \cdot c -  a \cdot (b \cdot c) + \{ a, b, c \},  
 \]
so the generator $\lb-,-,-\rb$ is redundant. Using the existing software to compute Gr\"obner bases for operads \cite{OpGb}, or eliminating the generator $\lb a, b, c \rb$ directly, one finds the following minimal set of relations between the minimal set of generators $-\cdot-$  and $\{-,-,-\}$: 
\begin{equation}\label{ay1'} \tag{AY1'}
\{ a, b \cdot c, d \} = \{ a \cdot b, c, d \},
\end{equation}
\begin{equation}\label{ay2'} \tag{AY2'}
\{ a, b, c \cdot d \} = \{ a, b, c \} \cdot d,
\end{equation}
\begin{equation}\label{ay3'} \tag{AY3'}
\{ a, b, \{ c, d , e \} \} = \{ \{ a, b, c \}, d, e \},
\end{equation}
\begin{equation}\label{ay4'} \tag{AY4'}
(a\cdot (b\cdot c))\cdot d = ((a\cdot  b) \cdot c)\cdot  d  -  a\cdot \{b, c, d\}  +  \{a, b, c\}\cdot d,
\end{equation}
\begin{equation}\label{ay5'} \tag{AY5'}
\{a\cdot (b\cdot c), d, e\} =\{(a\cdot b)\cdot c, d, e \}  - \{a, \{b, c, d\}, e\}  +  \{\{a, b, c\}, d, e\},
\end{equation}
\begin{equation}\label{ay6'} \tag{AY6'}
a\cdot (b\cdot (c\cdot d)) = a\cdot ((b\cdot c)\cdot d)  + (a\cdot b)\cdot(c\cdot d)  -  ((a\cdot b)\cdot c)\cdot d  +  a\cdot\{b, c, d\}  -  \{a\cdot b, c, d\}.
\end{equation}

Consider the ordering of monomials of the free nonsymmetric operad generated by $-\cdot-$  and $\{-,-,-\}$ which first compares the number of operations used in a monomial, and in case of a tie, compares the path sequences using the reverse graded lexicographic ordering~\cite{BD}, using the ordering of generators for which $-\cdot-$ is less than $\{-,-,-\}$. Then the leading terms of the identities given by the differences of the left hand sides and the right hand sides of~\eqref{ay1'}--\eqref{ay6'} are the monomials on the left hand sides. Thus, an upper bound for the dimension of $\Yam(n)$ is given by the number of monomials that are not divisible by any of the monomials 
 \[
\{ a, b \cdot c, d \}, \{ a, b, c \cdot d \},  \{ a, b, \{ c, d , e \} \}, (a\cdot (b\cdot c))\cdot d, \{a\cdot (b\cdot c), d, e\}, a\cdot (b\cdot (c\cdot d)).   
 \]
For a nonsymmetric operad with finitely many monomial relations, there are several different ways to compute the dimensions of its components, see, e.g.,~\cite{MR3301915}. In our case, the concrete form of the monomial relations suggests that we should introduce the following generating functions:
\begin{itemize}
\item $f(t)$ the generating function for all monomials that are not divisible by the relations, 
\item $x(t)$ the generating function for all monomials that are not divisible by the relations and $-\cdot-$ as the top level operation, 
\item $y(t)$ the generating function for all monomials that are not divisible by the relations and $\{-,-,-\}$ as the top level operation,
\item $z(t)$ the generating function for all monomials that are not divisible by the relations and $-\cdot(-\cdot-)$ at the top level.
\end{itemize} 
It is then easy to see that we have the following relations between these generating functions:
\begin{gather*}
f(t)=t+x(t)+y(t),\\
x(t)={(f(t)-z(t))}^2,\\
y(t)=(f(t)-z(t))(t+y(t))t,\\
z(t)=(f(t)-z(t))(x(t)-z(t)).
\end{gather*}
Indeed, 
\begin{itemize}
\item the first of these equations just means that a monomial is either the unit of the operad or has one of the generators at the top level,
\item the second equation means that as long as the top level operation is $-\cdot-$, we cannot have a monomial with the top level $-\cdot(-\cdot-)$ substituted as either of the two arguments, 
\item the third equation means that as long as the top level operation is $\{-,-,-\}$, we cannot have a monomial with the top level $-\cdot(-\cdot-)$ substituted as the first argument, we cannot have a monomial with the top level $-\cdot-$ substituted as the second argument, and we can only have the unit as the third argument,
\item the last equation means that to build an operation with $-\cdot(-\cdot-)$ at the top level, we must form an operation $m_1\cdot m_2$, where $m_1$ is a monomial that does not have $-\cdot(-\cdot-)$ at the top level (and thus is enumerated by $f(t)-z(t)$) and $m_2$ is a monomial that has $-\cdot-$ at the top level, but cannot have $-\cdot(-\cdot-)$ at the top level (and thus is enumerated by $x(t)-z(t)$).
\end{itemize}
Eliminating $x(t)$, $y(t)$, and $z(t)$ from these equations, we find
 \[ 
t^3{f(t)}^2 + t^2{f(t)}^2 + 2t^2f(t) + tf(t) + t - f=0,   
 \]
which, if we denote $A(t)=1+tf(t)$, can be written as 
 \[
A(t)=\frac{1}{1+t}+t{A(t)}^2,   
 \] 
the known functional equation for the generating function of the so-called Riordan numbers~\cite[A005043]{oeis}, counting noncrossing partitions of $\{0,\ldots,n\}$ without singleton blocks. This shows that the upper bound and the lower bound for $\dim\Yam(n)$ coincide, and hence the morphism $\psi$ must be an isomorphism.
\end{proof}

\begin{corollary}
We have 
 \[
\dim\Yam(n)=\sum_{k=0}^{n+1} (-1)^{n+1-k}\binom{n+1}{k}\frac{1}{k+1}\binom{2k}{k}.     
 \]
\end{corollary}

\begin{proof}
According to the proof of Theorem \ref{isom}, the generating function $A(t)$ of the Riordan numbers~\cite[A005043]{oeis} satisfies $A(t)=1+tf(t)$, so the requisite formula is obtained from the known formula for the Riordan numbers by a shift of argument.
\end{proof}

\begin{corollary}
Relations \eqref{ay1'}--\eqref{ay6'} form a Gröbner basis of relations of the operad $\Yam$.
\end{corollary}

\begin{proof}
According to the proof of Theorem \ref{isom}, the upper bound for $\dim\Yam(n)$ obtained via the leading terms of Relations~\eqref{ay1'}--\eqref{ay6'} coincides with the lower bound. Therefore, those leading terms generate the ideal of the leading terms of the operad $\Yam$, which implies that~ \eqref{ay1'}--\eqref{ay6'} form a Gröbner basis of relations of that operad.
\end{proof}

Examining the proof of Theorem \ref{isom} further, one can note that the auxiliary generating function $y(t)$ is just $t^2 f(t)$, that the coefficients of $x(t)$ are the generalized ballot numbers~\cite[A002026]{oeis} and that those of $f(t) - z(t)$
are the Motzkin numbers~\cite[A001006]{oeis}.

\section{Yamaguti operad and the adjoint module of \texorpdfstring{$\mathfrak{sl}_2$}{sl2}}\label{sec:adjoint}

Recall that an endomorphism operad of a vector space $V$ is the collection $\mathcal{E}\mathrm{nd}_V$ of multilinear maps from $V$ to $V$:
 \[
\mathcal{E}\mathrm{nd}_V(n)= \mathrm{Hom}(V^{\otimes n},V).     
 \]
In this section, we establish the second main result of this paper (found while this article was under peer review), which relates the Yamaguti operad to the equivariant endomorphism operad 
 \[
(\mathcal{E}\mathrm{nd}_{\mathfrak{sl}_2})^{\mathfrak{sl}_2}=\{\mathrm{Hom}_{\mathfrak{sl}_2}(\mathfrak{sl}_2^{\otimes n},\mathfrak{sl}_2)\}_{n\ge 1} 
 \]
of the adjoint module of the Lie algebra $\mathfrak{sl}_2$. We shall need an invariant symmetric bilinear form $K$ on that adjoint module. It will be convenient for us to choose the form 
 \[
K(u,v):=\frac{1}{2}\mathrm{tr}(\mathrm{ad}_u\mathrm{ad}_v) ,   
 \]
whose nonzero values on the basis $(e,f,h)$ are $K(e,f)=K(f,e)=2$, $K(h,h)=4$.

\begin{proposition}\label{prop:morphsl2}
There is a well-defined morphism of nonsymmetric operads
 \[
\phi\colon\Yam\to (\mathcal{E}\mathrm{nd}_{\mathfrak{sl}_2})^{\mathfrak{sl}_2}   
 \]
defined on the generators by the formula
\begin{align}
  a\cdot b&:= [a,b],\\
  \{ a, b, c \}&:=K(a,b)c,\\
  \lb a, b, c \rb&:=K(b,c)a.
\end{align}
\end{proposition}

\begin{proof}
The map $\phi$ sends the defining identities \eqref{ay1}--\eqref{ay11} of the Yamaguti operad to the following relations in $(\mathcal{E}\mathrm{nd}_{\mathfrak{sl}_2})^{\mathfrak{sl}_2} $:
\begin{gather*}
[[a,b],c]-[a,[b,c]]+K(a,b)c-K(b,c)a=0,\\ 
K([a,b],c)d=K(a,[b,c])d,\\
K(a,b)[c,d]=[K(a,b)c,d],\\
K(c,d)[a,b]=[a,K(c,d)b],\\
K([b,c],d)a=K(b,[c,d])a,\\
[a,K(b,c)d]=[K(b,c)a,d], \\
K(K(a,b)c,d)e = K(a,K(c,d)b)e=K(a,b)K(c,d)e,\\    
K(a,K(b,c)d)e =K(K(b,c)a,d)e,\\
K(d,e)K(b,c)a =K(K(b,c)d,e)a =K(b,K(d,e)c)a,\\
K(K(c,d)b,e)a=K(b,K(c,d)e)a,\\
K(a,b)K(d,e)c=K(d,e)K(a,b)c.
\end{gather*}
The first of them is actually equivalent to the relation  
 \[
[[a,c],b]+K(a,b)c-K(b,c)a=0  
 \]
found by Chmutov and Varchenko \cite[Th.~6]{MR1410469}; in fact, that latter relation is completely classical for vectors in $\mathbb{R}^3$ with the Lie algebra structure given by the cross product. The other relations follow from the bilinearity of the Lie bracket and the form $K$ and the invariance of the form $K$. We conclude that the map $\phi$ is well-defined. 
\end{proof}

\begin{theorem}\label{th:eq-endom}
The morphism $\phi\colon\Yam\to (\mathcal{E}\mathrm{nd}_{\mathfrak{sl}_2})^{\mathfrak{sl}_2}$ is an isomorphism.
\end{theorem}

\begin{proof}
Using the invariant form $K$, we may define an isomorphism of $\mathfrak{sl}_2$-modules $\mathfrak{sl}_2\cong\mathfrak{sl}_2^*$, and hence isomorphisms of $\mathfrak{sl}_2$-modules
 \[
\mathcal{E}\mathrm{nd}_{\mathfrak{sl}_2}(n)=\mathrm{Hom}(\mathfrak{sl}_2^{\otimes n},\mathfrak{sl}_2)\cong \mathfrak{sl}_2^{\otimes(n+1)}.
 \]

Tensor powers of the adjoint representation of $\mathfrak{sl}_2$ are at heart of the notion of the $\mathfrak{sl}_2$ weight system, an object of certain importance in the context of invariants of knots and links; we shall now give a brief recollection of this notion, referring the reader to the monograph \cite{MR2962302} and to the articles \cite{MR1410469,MR3556704} for details. 

Recall that a Jacobi diagram is a graph with univalent and trivalent vertices, with half-edges at each trivalent vertex cyclically ordered, and the univalent vertices carrying (possibly repeating) labels from a finite set $\{1,\ldots,k\}$, modulo the so called AS and IHX relations that are graphical counterparts of the anti-symmetry of Lie brackets and the Jacobi identity in Lie algebras. The vector space $V(k)$ spanned by all such Jacobi diagrams has a commutative associative algebra structure given by the disjoint union. The $\mathfrak{sl}_2$ weight system is an algebra homomorphism from $V(k)$ to the algebra of $\mathfrak{sl}_2$-invariants in $S(\mathfrak{sl}_2)^{\otimes k}$ given on connected Jacobi diagrams (which form a set of free generators of $V(k)$) as follows. One decorates each trivalent vertex of the Jacobi diagram by the invariant $3$-tensor in $\mathfrak{sl}_2^{\otimes 3}$ corresponding to the Lie bracket $[-,-]\in \mathrm{Hom}_{\mathfrak{sl}_2}(\mathfrak{sl}_2^{\otimes 2},\mathfrak{sl}_2)$ under the isomorphism above, and then computes the bilinear form $K$ on pairs of labels of all edges (where we assign a copy of $\mathfrak{sl}_2$ to each of the half-edges adjacent to a vertex following the cyclic ordering).    

Meilhan and Suzuki establish in~\cite[Th.~3.2]{MR3556704} that the vector space $(\mathfrak{sl}_2^{\otimes(n+1)})^{\mathfrak{sl}_2}$ has a basis of images under the $\mathfrak{sl}_2$ weight system of all Jacobi diagrams $D$ whose univalent vertices are bijectively labelled by $\{0,1,\ldots,n+1\}$, whose connected components are ordered linear trees, that is, trees of the form
\[\begin{tikzcd}
    & {i_1} && {i_{k-1}} & \\
    {i_0} & \bullet & \cdots & \bullet & {i_k}
    \arrow[no head, from=2-1, to=2-2]
    \arrow[no head, from=2-2, to=1-2]
    \arrow[no head, from=2-2, to=2-3]
    \arrow[no head, from=2-4, to=1-4]
    \arrow[no head, from=2-4, to=2-3]
    \arrow[no head, from=2-5, to=2-4]
\end{tikzcd}\]
with $i_0<i_1<\cdots<i_{k-1}<i_k$, and for which the partition of $\{0,1,\ldots,n+1\}$ according to the connected components of $D$ is a noncrossing partition without singletons. From our identification
 \[
(\mathcal{E}\mathrm{nd}_{\mathfrak{sl}_2})^{\mathfrak{sl}_2}(n)\cong (\mathfrak{sl}_2^{\otimes(n+1)})^{\mathfrak{sl}_2}     
 \]
and the above description of the $\mathfrak{sl}_2$ weight system, it follows that the elements in $(\mathcal{E}\mathrm{nd}_{\mathfrak{sl}_2})^{\mathfrak{sl}_2}$ corresponding to these elements are obtained by iterations of the composition maps $\circ_i$ from the operations 
\begin{align*}
  a\cdot b&:= [a,b],\\
  \{ a, b, c \}&:=K(a,b)c,\\
  \lb a, b, c \rb&:=K(b,c)a.
\end{align*}
Thus, the morphism $\phi$ of Proposition \ref{prop:morphsl2} is surjective. Moreover, since the above elements form a basis of $(\mathfrak{sl}_2^{\otimes(n+1)})^{\mathfrak{sl}_2}$ and are indexed by noncrossing partitions without singletons, we have $\dim\Yam(n)=\dim (\mathcal{E}\mathrm{nd}_{\mathfrak{sl}_2})^{\mathfrak{sl}_2}(n)$, and so the morphism $\phi$ is an isomorphism.
\end{proof}

Let us remark that collection of tensor powers $\{\mathfrak{sl}_2^{\otimes(n+1)}\}_{n\ge 1}$ has a nonsymmetric operad structure given, on decomposable tensors, by the formula
\begin{multline*}
(v_0\otimes v_1\otimes\cdots\otimes v_n)\circ_i(w_0\otimes w_1\otimes\cdots\otimes w_m)\\ =
K(v_i,w_0)v_0\otimes v_1\otimes\cdots\otimes v_{i-1}\otimes w_1\otimes\cdots \otimes w_m\otimes v_{i+1}\otimes\cdots \otimes v_n, 
\end{multline*}
and our identification 
 \[
\mathcal{E}\mathrm{nd}_{\mathfrak{sl}_2}\cong \{\mathfrak{sl}_2^{\otimes(n+1)}\}_{n\ge 1}
 \]
is an isomorphism of nonsymmetric operads. In fact, if we think of the factors $v_0$, \ldots, $v_n$ as of labels of $n+1$ segments on the boundary of a disk, with the segment $0$ at the bottom, this operad structure is easily seen to be compatible with the cyclic action, which makes $\mathcal{E}\mathrm{nd}_{\mathfrak{sl}_2}$ a nonsymmetric cyclic operad, and $(\mathcal{E}\mathrm{nd}_{\mathfrak{sl}_2})^{\mathfrak{sl}_2}$ its nonsymmetric cyclic suboperad. Examining the proof of Theorem \ref{th:eq-endom}, we see that it establishes an isomorphism with $\Yam$ in the category of nonsymmetric cyclic operads.  

We conclude by indicating a very surprising aspect of Theorem \ref{th:eq-endom}: the components of the equivariant endomorphism operad $(\mathcal{E}\mathrm{nd}_{\mathfrak{sl}_2})^{\mathfrak{sl}_2}$ have natural actions of symmetric groups permuting the arguments of the multilinear maps. Unlike the cyclic symmetry, these actions, while immediate for equivariant maps, are not at all visible on the level of the Yamaguti operad, either for its definition by generators and relations or for its incarnation via noncrossing partitions.

\section{Open questions}

There are several natural questions arising from our work. 

First, we note that for Yamaguti algebras, there is an analogue of the result of Bremner~\cite{MR3265628}: the binary operation of the Yamaguti operad satisfies nontrivial identities. By a computer calculation using Gr\"obner basis of operads \cite{OpGb} (where we assigned weight $0$ to the operation $-\cdot-$ and weight $1$ to the operation $\{-,-,-\}$, and computed the low arities part of the Gr\"obner basis for the corresponding elimination ordering), we established that the lowest degree in which such identity appears is $5$, and the corresponding identity is
 \[
a (b ((c d) e)) + (a ((b c) d)) e  +  ((a b) c) (d e) = a ((b (c d)) e)  +  (a b) (c (d e))  +  ((a (b c)) d) e.   
 \]
It might be interesting to determine all such identities. In the light of the results of Section \ref{sec:adjoint}, it is worth mentioning that we may view the above identity as an identity for the Lie bracket of $\mathfrak{sl}_2$. Using Gr\"obner bases for operads, one can easily check that this identity implies all identities of degree five that hold in $\mathfrak{sl}_2$; it is a well known fact going back to the work of Razmyslov \cite{MR340348} that all identities of the Lie algebra $\mathfrak{sl}_2$ follow from those of degree five, and hence this identity implies all identities of $\mathfrak{sl}_2$. We note that other choices of a single identity of degree five implying all identities of $\mathfrak{sl}_2$ were previously given by Filippov \cite[\S 2]{MR648319} and Mishchenko \cite[Lemma~2.3]{MR1186779}.

Second, the presentation of the Yamaguti operad is quadratic--linear, in that all defining relations are combinations of compositions of at most two generators. It would be interesting to know whether this presentation is inhomogeneous Koszul~\cite{MR2956319,MR1250981}. If that were true, this would give a new approach to the deformation theory of Yamaguti algebras, extending the results of~\cite{das}. The same question may be raised for the Lie--Yamaguti operad, where it is probably much harder.

A weaker form of the previous question is already of independent interest. Let us consider the filtration of the operad $\Yam$ by weight, so that $F^k\Yam(n)$ is the linear span of monomials with $n$ arguments obtained as compositions of at most $k$ generators. Under the isomorphism $\psi$ this corresponds to the filtration of $\mathscr{B}$ by the number of blocks, so that $F^k\mathscr{B}(n)$ is the linear span of noncrossing partitions with at least $n-k$ blocks. In the associated graded operad $\mathrm{gr}_F\mathscr{B}$, the composition is set-theoretical: only the first term in the composition of the operad $\mathscr{B}$ survives in the associated graded. We conjecture that the defining relations of that operad are quadratic (this would be the case if the Yamaguti operad were inhomogeneous Koszul). Note that the dimensions of the weight graded components of $\mathrm{gr}_F\mathscr{B}(n)$ assemble into the local $\gamma$-vector of the cluster subdivision $\Gamma(\Phi)$ associated to the root system $\Phi$ of type $A_{n+1}$, see \cite[Prop.~3.1]{MR2971012}.

The fact that there is a functor of change of operations~\cite{MR3169596} producing from each Yamaguti algebra a Lie--Yamaguti algebra raises a question whether the corresponding morphism of operads has the PBW property~\cite{MR4300233}. Computations with the Poincaré series of the corresponding operads in low arities suggests that it might be possible. 

Finally, it would be extremely interesting to determine what replaces the Yamaguti operad if one replaces $\mathfrak{sl}_2$ by $\mathfrak{sl}_3$ in the results of Section \ref{sec:adjoint}; this would almost certainly offer new interesting insights into the $\mathfrak{sl}_3$ weight system as well as into the identities of the Lie algebra $\mathfrak{sl}_3$, both of which are very far from being fully understood.
\printbibliography{}
\end{document}